\newtheorem{theorem}{Theorem}[section]
\newtheorem{lemma}[theorem]{Lemma}
\newtheorem{corollary}[theorem]{Corollary}
\newcommand{\upchi}{\raise1pt\hbox{$\chi$}}
\newcommand{\R}{{\mathord{\mathbb R}}}
\newcommand{\Ha}{{\mathord{\mathbb H}}}
\newcommand{\Sp}{{\mathord{\mathbb S}}}
\begin{document}

\title{Hardy inequalities for fractional integrals on general domains}
\author[Loss]{Michael Loss $^1$}
\author[Sloane]{Craig Sloane $^1$}

\address{ Georgia Institute of Technology, School of Mathematics,
Atlanta GA 30332-0160}
\address{\tt loss@math.gatech.edu , csloane@math.gatech.edu}

\date{\today}
\maketitle

\footnotetext
[1] {Work partially supported by NSF Grant DMS 0901304.\\
\copyright 2009  by the authors. This paper may be reproduced, in its
entirety, for non-commercial purposes.}

\vspace{.3truein}
\centerline{\bf Abstract}

\medskip
{\sl We prove a sharp Hardy inequality for fractional integrals for functions that are
supported in a general domain.
The constant is the same as the one for the half-space and hence our result settles a recent
conjecture of Bogdan and Dyda.}

\section{Introduction}
In this note we prove a conjecture by Bogdan and Dyda \cite{BD} concerning Hardy inequalities
for fractional integrals. It was shown in \cite{BD} that for any function $f $ supported in the half-space
$\Ha^n = \{x \in \R^n \ : \ x=(x_1, \dots, x_n), x_n>0\}$ 
\begin{equation}\label{bogdandyda}
\frac{1}{2} \int_{\Ha^n \times \Ha^n} \frac{|f(x) - f(y)|^2}{|x-y|^{n+\alpha}} dx dy
\ge \kappa_{n,\alpha} \int_{\Ha^n} \frac{|f(x)|^2}{ x_n^\alpha} dx \ .
\end{equation}
Here $0 < \alpha < 2$ and 
\begin{equation}
\kappa_{n,\alpha} =     \pi^{\frac{n-1}{2}} \frac{\Gamma(\frac{1+\alpha}{2})}{\Gamma(\frac{n+\alpha}{2})} \frac{1}{\alpha}
\left[\frac{2^{1-\alpha}}{\sqrt \pi} \Gamma(\frac{2-\alpha}{2})\Gamma(\frac{1+\alpha}{2}) - 1 \right] 
\end{equation}
is the sharp constant. Note that $\kappa_{n,1}=0$ and $\kappa_{n,\alpha} >0$ otherwise.

It was conjectured in \cite{BD} that for $1 < \alpha < 2$  this inequality continues to hold with the same constant 
for any convex set $\Omega$, i.e., for functions $f$ supported in $\Omega$
\begin{equation} \label{convex}
\frac{1}{2} \int_{\Omega \times \Omega} \frac{|f(x) - f(y)|^2}{|x-y|^{n+\alpha}} dx dy
\ge \kappa_{n,\alpha} \int_\Omega \frac{|f(x)|^2}{d_\Omega(x)^\alpha} dx \ ,
\end{equation}
where $d_\Omega(x)$ denotes the distance from the point $x \in \Omega$ to the boundary of $\Omega$. 
This is a precise analogue  of the Hardy inequality due to Davies \cite{Davies2}.
For $0 < \alpha < 1$ the inequality cannot hold for compact sets. A counterexample is given in  \cite{D}.

Sharp Hardy inequalities analogous to (\ref{convex}) but  for  the  $L^p$-norms
of gradients of functions are well known. The first result is due to Davies \cite{Davies2} for the case $p=2$.
The case for arbitrary $p$ is derived in \cite{MS} and \cite{MMP}. For a review the reader may consult \cite{Davies3}.
Let us add that these results have been considerably
generalized in \cite{BFT}.

Hardy inequalities for fractional integrals are of a more recent provenience, in particular the higher dimensional
versions were investigated by Dyda (see \cite{D}) in great generality following previous work in \cite{HKP} and \cite{CS}.  
While Hardy inequalities for fractional integrals are of interest in their own right, they deliver also spectral information
on the generators of censored stable processes. The generator of a censored stable process is defined
by the closure of the quadratic form on the left side of (\ref{convex}). Loosely speaking it is a stable
process with the jumps between $\Omega$ and its complement suppressed. 
The reference \cite{Burdzy} contains the construction of censored stable processes and a wealth of information about these.  
For the connection between Hardy inequalities and censored stable processes the reader may consult \cite{CS}.

Since we prove a stronger result than (\ref{convex}) we need a few concepts before we can state the result. 
Let $\Omega$ be any domain in $\R^n$ with non-empty boundary. The following notion is taken from Davies \cite{Davies}. Fix a direction $w \in \Sp^{n-1}$ and define
\begin{equation}
d_{w,\Omega}(x) = \min\{ |t|\ : \ x+tw \notin \Omega\} \ .
\end{equation}
Further, define the function
\begin{equation} \label{delta}
\delta_{w,\Omega}(x) = \sup\{|t| : x+tw \in \Omega\} \ ,
\end{equation}
i.e.,  $\delta_{w,\Omega}(x)$ is the point in the intersection 
of the line $x+tw$ and $\Omega$ that is farthest away from $x$
and set
\begin{equation}
\frac{1}{M_\alpha (x)^\alpha} := \frac{ \int_{\Sp^{n-1}} dw \left[  \frac{1}{d_{w, \Omega}(x)}
+ \frac{1}{\delta_{w, \Omega}(x)} \right] ^\alpha}{ \int_{\Sp^{n-1}} dw  |w_n|^\alpha} \ .
\end{equation}
The integral in the denominator can be easily computed to be
\begin{equation}\label{integral}
\int_{\Sp^{n-1}} dw  {|w_n|^\alpha} = 2 \pi^{\frac{n-1}{2}} \frac{\Gamma(\frac{1+\alpha}{2})}{\Gamma(\frac{n+\alpha}{2})}
\end{equation}
These definitions are analogous to the one in \cite{Davies} where all estimates are expressed in terms of 
$$
\frac{1}{m_2(x)^2} =  \frac{\int_{\Sp^{n-1}} dw \frac{1}{d_{w, \Omega}(x)^2}}{ |\Sp^{n-1}|/n} \ .
$$
In case the domain $\Omega$ is convex, the quantity $M_\alpha(x)$ can be bounded in terms of $d_\Omega(x)$ and
$D_\Omega(x)$, the {\it `width of $\Omega$ with respect to $x$'}. For convex domains with smooth boundary,
this quantity is given by the width of the smallest slab that contains $\Omega$ and consists of two parallel hyper-planes one of which is tangent
to $\partial \Omega$ at the point closest to $x$. For general convex sets we define it as follows. 
 Fix $x \in \Omega$ arbitrary and pick a  point $z$ on the boundary of $\Omega$ that is closest to $x$, so that
$d_\Omega(x)  = |x-z|$. In general, there may be more than one such point. 
Denote by $P_z$ the set of supporting  hyper-planes of $\Omega$ that pass through the  point $z$ and set
$$
\mathcal P_x = \cup_{z \in \partial \Omega, |z-x| = d_\Omega (x)} P_z \ .
$$
For $P \in \mathcal P_x$, we denote by $S(P)$ the smallest slab that contains $\Omega$ and is bounded by $P$ 
on one side and a hyper-plane parallel to it on the other.
Such a slab might be a half space if $\Omega$ is unbounded.
The width $D_{S(P)}$ of the slab $S(P)$ is, naturally, the distance between the two bounding hyper-planes.  
We set $D_{S(P)} = \infty$ if $S(P)$ is a half space.
Now we define  
\begin{equation} \label{width}
D_\Omega(x) = \inf_{P \in \mathcal P_x}  D_{S(P)} \ .
\end{equation}

The inequality
\begin{equation} \label{useful}
\frac{1}{M_\alpha (x)^\alpha}  \ge \left[ \frac{1}{d_\Omega(x)} + \frac{1}{D_\Omega (x) -d_\Omega(x)}\right]^\alpha \ ,
\end{equation}
 follows from
\begin{equation} \label{easytosee}
\int_{\Sp^{n-1}} dw \left[ \frac{1}{d_{w,\Omega}(x)} + \frac{1}{\delta_{w,\Omega}(x)}\right]^\alpha
\ge  \int_{\Sp^{n-1}}dw  |w_n|^\alpha  \left[ \frac{1}{d_\Omega(x)} + \frac{1}{D_\Omega (x) -d_\Omega(x)}\right]^\alpha \ .
\end{equation}
Indeed, for given $P$ pick coordinates such that the standard vector $e_n$ is normal to the plane $P$.
Clearly  $ d_{w,\Omega}(x) \le d_{w,S(P)}(x)$ and $ \delta_{w,\Omega}(x) \le \delta_{w,S(P)}(x)$.
Further, note that $d_{w,S(P)}(x)+\delta_{w,S(P)}(x)$ is the length of the segment given by intersecting the slab $S(P)$
with the line $x+tw$. 
Projecting this segment onto the line normal to the slab yields
$$
d_{w,S(P)}(x) |w_n| = d_\Omega(x) \ , \  \delta_{w,S(P)}(x)|w_n| = D_{S(P)} - d_\Omega(x)  \ .
$$
Note that there may exist directions $w$ where the length of this segment is not finite in which case we set
$D_{S(P)}=\infty$.
Thus,
$$
\left[ \frac{1}{d_{w,\Omega}(x)} + \frac{1}{\delta_{w,\Omega}(x)}\right]^\alpha
\ge  |w_n|^\alpha  \left[ \frac{1}{d_\Omega(x)} + \frac{1}{D_{S(P)} -d_\Omega(x)}\right]^\alpha 
$$
holds for all $P \in \mathcal P_x$. 
Taking the supremum over $\mathcal P_x$ and integrating with respect to 
$w$ over the unit sphere yields  (\ref{easytosee}).
With these preparations we can state our main theorem. 
\begin{theorem} \label{main}
Let $\Omega$  be a domain with non-empty boundary and $1 < \alpha < 2$. For any $f \in C^\infty_c(\Omega)$ 
\begin{equation} \label{fundi}
\frac{1}{2} \int_{\Omega \times \Omega} \frac{|f(x) - f(y)|^2}{|x-y|^{n+\alpha}} dx dy
\ge \kappa_{n,\alpha} \int_\Omega \frac{|f(x)|^2}{M_\alpha(x)^\alpha} dx \ .
\end{equation}
In particular, if $\Omega$ is a convex region then for any $f \in C^\infty_c(\Omega)$
\begin{equation} \label{fundiconvex}
\frac{1}{2} \int_{\Omega \times \Omega} \frac{|f(x) - f(y)|^2}{|x-y|^{n+\alpha}} dx dy
\ge  \kappa_{n,\alpha} \int_\Omega {|f(x)|^2}\left[ \frac{1}{d_\Omega(x)} + \frac{1}{D_\Omega (x) -d_\Omega(x)}\right]^\alpha dx
\end{equation}
where $d_\Omega(x)$ is the distance of $x \in \Omega$ to the boundary of $\Omega$ and $D_\Omega(x)$ is defined in (\ref{width}).  The constant $\kappa_{n,\alpha}$ is best possible.
\end{theorem}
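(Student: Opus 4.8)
The plan is to slice $\Omega$ along lines, reduce \eqref{fundi} to a sharp one-dimensional Hardy inequality on an interval, and then integrate the resulting pointwise weight over $\Sp^{n-1}$ to recover $M_\alpha(x)$. First I would set up the slicing: fixing a direction $w\in\Sp^{n-1}$ and writing $\R^n=\R w\oplus w^\perp$, a point becomes $z+sw$ with $z\in w^\perp$ and $s\in\R$. A standard polar-coordinates computation ($x-y=rw$, $r>0$) followed by the change to these coordinates gives
\[
\frac12\int_{\Omega\times\Omega}\frac{|f(x)-f(y)|^2}{|x-y|^{n+\alpha}}\,dx\,dy
=\frac14\int_{\Sp^{n-1}}dw\int_{w^\perp}dz\int_{I_{z,w}}\!\int_{I_{z,w}}\frac{|g_{z,w}(s)-g_{z,w}(t)|^2}{|s-t|^{1+\alpha}}\,ds\,dt,
\]
where $g_{z,w}(s)=f(z+sw)$ and $I_{z,w}=\{s\in\R:z+sw\in\Omega\}$ is the (open) section of $\Omega$. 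Since $f\in C^\infty_c(\Omega)$, each $g_{z,w}$ is smooth with compact support in $I_{z,w}$, and discarding the nonnegative cross terms between distinct connected components of $I_{z,w}$ only lowers the right-hand side; so the problem reduces to bounding $\int_J\int_J$ over a single component $J=(a,b)$.

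The heart of the proof is the one-dimensional inequality: for every interval $(a,b)$ with $-\infty\le a<b\le\infty$ and every $g\in C^\infty_c(a,b)$,
\[
\frac12\int_a^b\!\int_a^b\frac{|g(s)-g(t)|^2}{|s-t|^{1+\alpha}}\,ds\,dt
\ \ge\ \kappa_{1,\alpha}\int_a^b|g(s)|^2\Big[\frac{1}{s-a}+\frac{1}{b-s}\Big]^\alpha\,ds ,
\]
with the convention $1/\infty=0$, where $\kappa_{1,\alpha}=\tfrac1\alpha\big(\tfrac{2^{1-\alpha}}{\sqrt\pi}\Gamma(\tfrac{2-\alpha}{2})\Gamma(\tfrac{1+\alpha}{2})-1\big)$, so that $\kappa_{n,\alpha}=\pi^{(n-1)/2}\tfrac{\Gamma((1+\alpha)/2)}{\Gamma((n+\alpha)/2)}\,\kappa_{1,\alpha}$. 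I would prove it by the ground-state substitution. After a translation and dilation one may take $(a,b)=(-1,1)$; set $\phi(s)=(1-s^2)^{(\alpha-1)/2}$, which is positive on $(-1,1)$ and, because $1<\alpha<2$, vanishes at $\pm1$, and write $g=\phi h$. Symmetrizing the elementary pointwise identity
\[
|g(s)-g(t)|^2=\phi(s)\phi(t)|h(s)-h(t)|^2+(\phi(s)-\phi(t))\big(\phi(s)|h(s)|^2-\phi(t)|h(t)|^2\big)
\]
against the symmetric kernel $|s-t|^{-1-\alpha}$ yields the ground-state representation
\[
\frac12\int_{-1}^1\!\int_{-1}^1\frac{|g(s)-g(t)|^2}{|s-t|^{1+\alpha}}\,ds\,dt
=\int_{-1}^1|g(s)|^2 V(s)\,ds+\frac12\int_{-1}^1\!\int_{-1}^1\frac{|h(s)-h(t)|^2}{|s-t|^{1+\alpha}}\phi(s)\phi(t)\,ds\,dt,
\]
with $V(s)=\phi(s)^{-1}\,\mathrm{p.v.}\!\int_{-1}^1\frac{\phi(s)-\phi(t)}{|s-t|^{1+\alpha}}\,dt$. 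Since the last term is nonnegative, the inequality reduces to verifying the identity $V(s)=\kappa_{1,\alpha}\big(\tfrac{1}{1+s}+\tfrac{1}{1-s}\big)^\alpha$ on $(-1,1)$ --- equivalently, the closed-form evaluation of the fractional Laplacian of the power $(1-s^2)^{(\alpha-1)/2}$ on the interval. I expect this to be the main obstacle; it should be carried out with Beta-function (hypergeometric) integral identities, keeping track of the principal value near the diagonal and of the behaviour of $\phi$ at $\pm1$. Sending $b\to\infty$ recovers the half-line inequality of \cite{BD} and pins the constant at $\kappa_{1,\alpha}$.

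Finally I would reassemble. If $x=z+sw$ lies in the component $(a,b)$ of $I_{z,w}$ then $d_{w,\Omega}(x)=\min(s-a,\,b-s)$ and $\delta_{w,\Omega}(x)\ge\max(s-a,\,b-s)$, so $\big[\tfrac{1}{s-a}+\tfrac{1}{b-s}\big]^\alpha\ge\big[\tfrac{1}{d_{w,\Omega}(x)}+\tfrac{1}{\delta_{w,\Omega}(x)}\big]^\alpha$. Summing the one-dimensional estimate over the components of $I_{z,w}$, undoing the slicing by Fubini, and integrating over $w\in\Sp^{n-1}$ gives
\[
\frac12\int_{\Omega\times\Omega}\frac{|f(x)-f(y)|^2}{|x-y|^{n+\alpha}}\,dx\,dy
\ \ge\ \frac{\kappa_{1,\alpha}}{2}\int_\Omega|f(x)|^2\Big(\int_{\Sp^{n-1}}\Big[\frac{1}{d_{w,\Omega}(x)}+\frac{1}{\delta_{w,\Omega}(x)}\Big]^\alpha dw\Big)\,dx ;
\]
by the definition of $M_\alpha$ and \eqref{integral} the inner integral equals $M_\alpha(x)^{-\alpha}\cdot 2\pi^{(n-1)/2}\tfrac{\Gamma((1+\alpha)/2)}{\Gamma((n+\alpha)/2)}$, and $\tfrac{\kappa_{1,\alpha}}{2}\cdot 2\pi^{(n-1)/2}\tfrac{\Gamma((1+\alpha)/2)}{\Gamma((n+\alpha)/2)}=\kappa_{n,\alpha}$, which is exactly \eqref{fundi}. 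The convex statement \eqref{fundiconvex} follows from \eqref{fundi} together with \eqref{easytosee}--\eqref{useful}, and $\kappa_{n,\alpha}$ is best possible because for $\Omega=\Ha^n$ one has $d_{w,\Omega}(x)=x_n/|w_n|$, $\delta_{w,\Omega}(x)=\infty$, hence $M_\alpha(x)=x_n$, so \eqref{fundi} reduces to \eqref{bogdandyda}, whose constant is sharp by \cite{BD}.
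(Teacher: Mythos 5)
Your overall architecture matches the paper's: slice along lines (same as the paper's Lemma \ref{reduction}), prove a sharp one-dimensional Hardy inequality on an interval, then integrate over $\Sp^{n-1}$ and invoke \eqref{integral}. The reassembly and the accounting of the constant $\kappa_{n,\alpha}=\pi^{(n-1)/2}\tfrac{\Gamma((1+\alpha)/2)}{\Gamma((n+\alpha)/2)}\kappa_{1,\alpha}$ are correct. The genuine difference is in the one-dimensional step, and this is where there is a gap.

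You attack Theorem \ref{interval} by a ground-state substitution directly on $(-1,1)$ with $\phi(s)=(1-s^2)^{(\alpha-1)/2}$ and assert that the resulting potential satisfies the \emph{identity} $V(s)=\kappa_{1,\alpha}\bigl(\tfrac{1}{1+s}+\tfrac{1}{1-s}\bigr)^\alpha$. That identity is false. Transporting the known half-line ground state $x^{(\alpha-1)/2}$ through the M\"obius map $y=1/(1+x)$ (exactly the paper's Lemma \ref{conformal}) shows that $\phi$ \emph{is} the correct ground state on the interval, but the corresponding potential, written on $(0,1)$, is
\[
V(y)=\frac{\kappa_{1,\alpha}}{\bigl(y(1-y)\bigr)^{\alpha}}
+\frac{1}{\alpha}\,\frac{1-y^{\alpha}-(1-y)^{\alpha}}{\bigl(y(1-y)\bigr)^{\alpha}},
\]
which is strictly larger than $\kappa_{1,\alpha}\bigl(\tfrac1y+\tfrac1{1-y}\bigr)^\alpha$ on $(0,1)$; the extra term is precisely the one appearing in the paper's inequality \eqref{interesting}. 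So what you actually need is the \emph{inequality} $V\ge\kappa_{1,\alpha}(\cdots)^\alpha$, equivalently $1-y^\alpha-(1-y)^\alpha\ge 0$ for $1<\alpha<2$, and you have neither computed $V$ nor supplied an independent argument for this lower bound; you flag the evaluation of $V$ as ``the main obstacle'' without resolving it. This is the heart of the one-dimensional theorem, and the paper's solution is to sidestep the explicit evaluation of $V$ altogether: Lemma \ref{conformal} shows the full-line quadratic form is invariant under the inversion $x\mapsto 1/x$, which transfers the interval problem to the half-line problem (where the Bogdan--Dyda constant is known) at the cost of the explicit, sign-definite correction term. If you carry out the Beta/hypergeometric computation you mention, you will land on the formula above with the extra term, and then must separately check its nonnegativity; as written, the ``reduces to verifying the identity'' step is a hole.

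A secondary point: your sharpness argument only treats $\Omega=\Ha^n$. The theorem asserts optimality of $\kappa_{n,\alpha}$ in \eqref{fundiconvex} for an arbitrary convex $\Omega$; the paper handles this by transplanting the Bogdan--Dyda trial functions near a tangent hyperplane of $\Omega$ (imitating the argument of Theorem 5 of \cite{MMP}). Your observation that $M_\alpha(x)=x_n$ for the half-space is correct and shows \eqref{fundi} reproduces \eqref{bogdandyda}, but it does not by itself give sharpness for every convex domain.
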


It was pointed out to us by Rupert Frank and Robert Seiringer that this Theorem \ref{main} can be generalized,
albeit in a weaker form, by replacing the powers
$2$ by $p > 1$. More precisely we have,

\begin{theorem}\label{frasei} Let $1 < p < \infty $ and $1< \alpha < p$. Then for any domain $\Omega \subset \R^n$ and any $f \in C^\infty_c(\Omega)$
\begin{equation}
\int_{\Omega \times \Omega} \frac{|f(x) - f(y)|^p}{|x-y|^{n+\alpha}} dx dy
\ge \mathcal{D}_{n,p,\alpha} \int_\Omega \frac{|f(x)|^p}{m_{\alpha}(x)^{\alpha}} dx
\end{equation}
where
\begin{equation}
\frac{1}{m_{\alpha}(x)^{\alpha}} :=  \frac{ \int_{\Sp^{n-1}} dw  \frac{1}{d_{w, \Omega}(x)^{\alpha}}}{ \int_{\Sp^{n-1}} dw  |w_n|^{\alpha}} \ .
\end{equation}
and
\begin{equation}
\mathcal{D}_{n,p,\alpha} = 2 \pi^{\frac{n-1}{2}} \frac{\Gamma(\frac{1+\alpha}{2})}{\Gamma(\frac{n+\alpha}{2})}  
 \int_0^1 \frac{|1-r^{\frac{\alpha-1}{p}}|^p}
 {(1-r)^{1+\alpha}} dr 
 \end{equation}
is the sharp constant. In particular, for $\Omega$ convex 
\begin{equation}
\int_{\Omega \times \Omega} \frac{|f(x) - f(y)|^p}{|x-y|^{n+\alpha}} dx dy
\ge \mathcal{D}_{n,p,\alpha} \int_\Omega \frac{|f(x)|^p}{d_\Omega(x)^{\alpha}} dx \ .
\end{equation}
\end{theorem}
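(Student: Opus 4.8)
### Proof Strategy for Theorem \ref{frasei}

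The plan is to reduce the $p$-version to a one-dimensional inequality along lines, exactly in the spirit of the proof of Theorem \ref{main} but with the square replaced by a $p$-th power. First I would slice the double integral on the left by the coarea/polar decomposition: for each direction $w \in \Sp^{n-1}$ and each line $\ell = \{x_0 + tw\}$ parallel to $w$, write $x = x_0 + tw$, $y = x_0 + sw$ (so that $|x-y| = |t-s|$), and integrate the restriction $g(t) = f(x_0+tw)$ over the line. Concretely, parametrizing $\R^n$ by the hyperplane $w^\perp$ together with the coordinate along $w$, one gets
\begin{equation*}
\int_{\Omega\times\Omega} \frac{|f(x)-f(y)|^p}{|x-y|^{n+\alpha}}\,dx\,dy
= c_n \int_{\Sp^{n-1}} dw \int_{w^\perp} d\xi \int_{I_\xi \times I_\xi} \frac{|g_\xi(t)-g_\xi(s)|^p}{|t-s|^{1+\alpha}}\,dt\,ds,
\end{equation*}
where $I_\xi$ is the (open) set where the line meets $\Omega$ and the Jacobian factor $|x-y|^{n+\alpha}$ splits as $|t-s|^{n-1}\cdot|t-s|^{1+\alpha}$ after accounting for the spherical measure; here I am suppressing the precise constant $c_n$, which is fixed by comparing with the model half-line case. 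The point is that everything is now an average over directions of a purely one-dimensional fractional Hardy functional on the (possibly disconnected) open set $I_\xi \subset \R$.

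The second step is the one-dimensional inequality: for an open set $I \subset \R$ and $h \in C^\infty_c(I)$,
\begin{equation*}
\int_{I\times I} \frac{|h(t)-h(s)|^p}{|t-s|^{1+\alpha}}\,dt\,ds \ge \Lambda_{p,\alpha}\int_I \frac{|h(t)|^p}{d_I(t)^\alpha}\,dt,
\qquad \Lambda_{p,\alpha} = \int_0^1 \frac{|1-r^{(\alpha-1)/p}|^p}{(1-r)^{1+\alpha}}\,dr,
\end{equation*}
where $d_I(t)$ is the distance from $t$ to the complement of $I$. For a single interval (finite, half-infinite, or all of $\R$) this is the sharp one-dimensional fractional Hardy inequality; the optimizing profile is $t \mapsto d_I(t)^{(\alpha-1)/p}$, which is what produces the exponent $(\alpha-1)/p$ in the integrand of $\Lambda_{p,\alpha}$. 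I would prove it by the standard ground-state substitution: write $h = d_I^{(\alpha-1)/p}\,u$ (on each component), expand $|h(t)-h(s)|^p$, and use the elementary pointwise inequality
\begin{equation*}
|a-b|^p \ge |a|^p + |b|^p \cdot(\text{something}) \ \text{more precisely} \ |A\phi(t)-B\phi(s)|^p \ge \ldots
\end{equation*}
together with convexity to throw away the cross terms in $u$, reducing matters to the inequality for the pure power $d_I^{(\alpha-1)/p}$, which can be computed explicitly. For the restriction to a general open set $I$ (a disjoint union of intervals) one keeps only the diagonal contribution, i.e. the sum over single components, and on each component invokes monotonicity in the domain: enlarging an interval only decreases $d_I$, hence one may replace each component by the half-line or whole line that best matches the local geometry; the distance function $d_I(t)$ restricted to a component is bounded above by the distance to the nearer endpoint, and that endpoint contribution alone already gives the half-line constant. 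This is where the reduction from ``two endpoints'' (the $d_{w,\Omega}^{-1} + \delta_{w,\Omega}^{-1}$ expression in Theorem \ref{main}) to ``one endpoint'' (just $d_{w,\Omega}^{-\alpha}$) happens, and it is the reason Theorem \ref{frasei} is weaker: for $p \ne 2$ one cannot symmetrize the cross term to recover the full additive gain at both ends.

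The third step is simply to reassemble: insert the one-dimensional bound into the sliced identity, so that the right-hand side becomes
\begin{equation*}
c_n\,\Lambda_{p,\alpha}\int_{\Sp^{n-1}}dw\int_\Omega \frac{|f(x)|^p}{d_{w,\Omega}(x)^\alpha}\,dx
= c_n\,\Lambda_{p,\alpha}\int_\Omega |f(x)|^p\Big(\int_{\Sp^{n-1}}\frac{dw}{d_{w,\Omega}(x)^\alpha}\Big)dx,
\end{equation*}
and then recognize $\int_{\Sp^{n-1}} d_{w,\Omega}(x)^{-\alpha}\,dw = m_\alpha(x)^{-\alpha}\int_{\Sp^{n-1}}|w_n|^\alpha\,dw$ by the very definition of $m_\alpha$; the normalizing spherical integral (\ref{integral}) combines with $c_n$ and $\Lambda_{p,\alpha}$ to yield exactly $\mathcal D_{n,p,\alpha}$. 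Finally, sharpness follows by testing on functions concentrated near a boundary point where $\Omega$ looks like a half-space and $d_{w,\Omega}$, $\delta_{w,\Omega}$ degenerate to the half-line values: then every inequality used becomes an equality in the limit. The main obstacle I anticipate is the pointwise $p$-convexity lemma controlling the cross terms after the ground-state substitution — for $p=2$ it is a one-line identity, but for general $p$ one needs the right inequality of the form $|a-b|^p - |a|^p \ge -p|a|^{p-2}a\,b + (\text{nonneg. remainder in }b)$ valid for all real $a,b$, carefully enough to close the estimate after integrating in $s$, and getting the constant $\Lambda_{p,\alpha}$ (rather than something smaller) out of it requires choosing the exponent $(\alpha-1)/p$ so that the leading remainder integrates to exactly this value.
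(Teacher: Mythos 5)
Your overall route — slice along directions $w$, reduce to a one-dimensional fractional Hardy inequality on each line, then reassemble using the definition of $m_\alpha$ and the spherical integral (\ref{integral}) — is exactly the paper's route (Lemma \ref{reduction} plus a 1D estimate), and you correctly identify the ground-state profile $x\mapsto x^{(\alpha-1)/p}$ as the source of the exponent in $\mathcal D_{n,p,\alpha}$, as well as the reason the $p\ne 2$ version only sees the nearer endpoint rather than the harmonic sum of both.

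However, the one-dimensional lemma — the crux of the whole argument — is left as a sketch with a genuine gap. You propose to substitute $h = d_I^{(\alpha-1)/p}u$, ``expand $|h(t)-h(s)|^p$, and use the elementary pointwise inequality [\,\ldots] together with convexity to throw away the cross terms,'' but you never state the pointwise inequality, and you yourself flag it as ``the main obstacle I anticipate.'' That obstacle is real: getting a sharp remainder out of a $p$-convexity expansion is precisely the content of the nonlinear ground-state representation of Frank and Seiringer, which the paper invokes as \cite[Prop.~2.2]{FrSe1} together with \cite[Lemma~2.4]{FS} for the explicit half-line identity $2\int_0^\infty(\omega(x)-\omega(y))|\omega(x)-\omega(y)|^{p-2}|x-y|^{-1-\alpha}\,dy=\mathcal D_{1,p,\alpha}\,\omega(x)^{p-1}x^{-\alpha}$ with $\omega(x)=x^{(\alpha-1)/p}$. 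Without either proving that lemma or citing it, the 1D inequality is not established. A second, smaller gap: your ``monotonicity in the domain'' argument for passing from the half-line to a bounded interval is backwards (enlarging $I$ \emph{increases} $d_I$, and also enlarges the domain of the double integral, so it is not a one-sided comparison). The paper handles this by restricting the half-line identity to $(0,1)$ and observing that the discarded tail integral $\int_1^\infty(\omega(x)-\omega(y))|\omega(x)-\omega(y)|^{p-2}|x-y|^{-1-\alpha}\,dy$ is $\le 0$ for $x\in(0,1)$ — a sign observation, not a domain-monotonicity argument — which yields a potential $V(x)\ge\mathcal D_{1,p,\alpha}\,x^{-\alpha}$ on $(0,1)$ to which the ground-state representation applies. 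That truncation step is the device you would need to make your interval reduction rigorous.
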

The constant $\mathcal{D}_{n,p,s}$ has been computed before in \cite{FS} as the sharp constant for the Hardy inequality
for the half-space. For $0 < p \le 1$ the inequality continuous to hold (see \cite{D}), however, the sharp constant is not known.

In the next section we establish the analogous one dimensional inequalities and then show how an averaging
argument leads to the general result. At the end of Section \ref{onedim} we indicate how to obtain the result
for general values of $p$.
We are grateful to Rupert Frank and Robert Seiringer to allow us to include arguments in our work. 
We present them at the end of our paper.

\medskip
\noindent
{\bf Acknowledgment:}  M.L. would like to
thank the Erwin Schr\"odinger Institut for its kind hospitality and useful discussions with Thomas Hoffmann-Ostenhof.
The authors thank an anonymous referee for suggesting various improvements of the manuscript.

\section{The one dimensional problem} \label{onedim}

The proof of Theorem \ref{main} will rely heavily on the following one dimensional inequality. 
\begin{theorem}  \label{interval} 
Let $f \in C^\infty_c((a,b))$. Then for all $1 < \alpha < 2$ we have
 \begin{equation} 
 \frac{1}{2}\int_{(a,b) \times (a,b)} \frac{|f(x) -f(y)|^2}{|x-y|^{1+\alpha}} dx dy \ge  \kappa_{1,\alpha}  \int_a^b  |f(x)|^2 \left(\frac{1}{x-a}
 +\frac{1}{b-x} \right)^\alpha dx \ .
 \end{equation}
 \end{theorem}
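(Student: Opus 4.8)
The plan is to reduce the one-dimensional inequality on a finite interval $(a,b)$ to the half-line case, where the sharp constant $\kappa_{1,\alpha}$ is already known from \cite{BD}, by exploiting a conformal-type change of variables that maps $(a,b)$ onto $(0,\infty)$ and is compatible with the fractional Gagliardo seminorm of order $\alpha$. After an affine normalization we may assume $(a,b)=(-1,1)$. The natural substitution is $x = \varphi(s)$ with $\varphi$ a Möbius map sending $(-1,1)$ to $(0,\infty)$, for instance $\varphi(s) = \tfrac{1+s}{1-s}$; one then sets $g(s) = (\varphi'(s))^{1/2}\, |x-a|^{?}\cdots$ — more precisely one absorbs a power of the Jacobian into the unknown function so that both sides transform with matching weights. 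The key algebraic identity to verify is that for a Möbius transformation $\varphi$,
\[
|\varphi(s) - \varphi(t)| = |\varphi'(s)|^{1/2}\,|\varphi'(t)|^{1/2}\, |s-t| \ ,
\]
which makes the kernel $|x-y|^{-(1+\alpha)}\,dx\,dy$ transform into $|\varphi'(s)|^{(1-\alpha)/2}|\varphi'(t)|^{(1-\alpha)/2}\,|s-t|^{-(1+\alpha)}\,ds\,dt$, and simultaneously handles the boundary weight since $x-a$ and $b-x$ are, up to constants, $|\varphi'|^{1/2}$ times elementary factors.

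First I would record this Möbius identity and compute $\varphi'$ explicitly, then define the new function $g(s) = w(s)\, f(\varphi(s))$ with the weight $w(s) = |\varphi'(s)|^{(\alpha-1)/2}$ chosen precisely so that the left-hand seminorm on $(-1,1)$ becomes the half-line seminorm of $g$ \emph{plus} a manifestly nonnegative cross term coming from the fact that $|f(x)-f(y)|^2 = |g(s)/w(s) - g(t)/w(t)|^2$ expands, after multiplying through by $w(s)w(t)$, into $|g(s)-g(t)|^2$ minus a term proportional to $(w(s)-w(t))(\ldots)$; the correct bookkeeping shows the discrepancy is a positive multiple of an integral of $|g(s)|^2$ against a nonnegative kernel, which only \emph{helps} the inequality. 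Second, I would apply the known half-line inequality \eqref{bogdandyda} (the case $n=1$, $\Ha^1 = (0,\infty)$) to $g$, obtaining the constant $\kappa_{1,\alpha}$ times $\int_0^\infty |g(s)|^2 s^{-\alpha}\,ds$. Third, I would transform this lower bound back to the $x$ variable: the weight $s^{-\alpha}$ pulls back, using $s = \varphi^{-1}(x)$ and the explicit form of $\varphi$, to exactly $\big(\tfrac{1}{x-a} + \tfrac{1}{b-x}\big)^\alpha$ after accounting for the Jacobian and the factor $w(s)^2$ — this is the point where the somewhat magical-looking form of the weight in the theorem comes out.

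An alternative, and perhaps cleaner, route avoids conformal maps entirely and mimics the ground-state-substitution / Dyda-type argument directly on $(a,b)$: one guesses the virtual ground state $u(x) = \big((x-a)(b-x)\big)^{(\alpha-1)/2}$ (the product of the two boundary distances to the natural power), writes $f = u h$, expands $|f(x)-f(y)|^2 = |u(x)h(x) - u(y)h(y)|^2$ and uses the elementary identity
\[
|u(x)h(x) - u(y)h(y)|^2 = u(x)u(y)\,|h(x)-h(y)|^2 + \big(u(x)-u(y)\big)\big(u(x)h(x)^2 - u(y)h(y)^2\big) \ ,
\]
integrates the second term against the symmetric kernel and integrates by parts (in the principal-value sense) to convert it into $\int |h(x)|^2 u(x)\,L_\alpha u(x)\,dx$ where $L_\alpha$ is the relevant one-dimensional fractional operator, and the first term is dropped as nonnegative. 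The crux is then the pointwise computation that
\[
u(x)\,\mathrm{p.v.}\!\int_a^b \frac{u(x) - u(y)}{|x-y|^{1+\alpha}}\,dy \ \ge\ \kappa_{1,\alpha}\,\Big(\tfrac{1}{x-a} + \tfrac{1}{b-x}\Big)^\alpha u(x)^2 \ ,
\]
which on the half-line is an \emph{equality} with $u(x)=x^{(\alpha-1)/2}$ by the Bogdan–Dyda computation, and on the finite interval should be an inequality in the favorable direction.

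The main obstacle I anticipate is precisely this last step — controlling the fractional-Laplacian-type integral of the ansatz $u$ over the finite interval and showing the leftover (non-sign-definite) terms have the right sign. On the half-line the computation closes by an exact Beta-function evaluation (that is how $\kappa_{1,\alpha}$ and the bracket $[\cdots]$ in its definition arise); on $(a,b)$ the truncation of the domain of integration and the two-sided nature of the weight $\big(\tfrac{1}{x-a}+\tfrac{1}{b-x}\big)^\alpha$ mean one must either establish a monotonicity/convexity estimate comparing the interval integral to a sum of two half-line integrals, or carry out the conformal change of variables above and argue that the error term it generates is a positive-kernel quadratic form in $g$. I expect the conformal route to be the more robust one: it localizes all the analytic difficulty into the single Möbius identity plus a Jacobian bookkeeping, and it explains structurally why the same constant $\kappa_{1,\alpha}$ survives. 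The requirement $1<\alpha<2$ will enter through the integrability of $u' $ near the endpoints (so that the integration by parts / principal value manipulations are legitimate) and through positivity of $\kappa_{1,\alpha}$.
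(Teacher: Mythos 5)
Your Route 1 (the M\"obius change of variables) is essentially the approach the paper takes, and you have correctly identified its core ingredients: the identity $|\varphi(s)-\varphi(t)| = |\varphi'(s)|^{1/2}|\varphi'(t)|^{1/2}|s-t|$ for M\"obius maps, the fact that the kernel-plus-Jacobian acquires the factor $|\varphi'(s)|^{(1-\alpha)/2}|\varphi'(t)|^{(1-\alpha)/2}$, and the reduction to the Bogdan--Dyda half-line inequality. (Small slip: the matching weight must therefore be $w = |\varphi'|^{(1-\alpha)/2}$, not $|\varphi'|^{(\alpha-1)/2}$; with the inversion $\varphi(x)=1/x$ this gives the paper's $g(x)=|x|^{\alpha-1}f(1/x)$.)

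The genuine gap is your treatment of the residual term. You assert that expanding $w(s)w(t)|h(s)-h(t)|^2 = |g(s)-g(t)|^2 + (w(s)-w(t))\bigl(w(s)|h(s)|^2-w(t)|h(t)|^2\bigr)$ yields a cross term that is ``manifestly nonnegative'' after integration, so that passing from the interval seminorm to the half-line seminorm of $g$ only helps. This is not manifest: the cross term is a principal-value integral of the form $\int g\,w^{-1}(\text{p.v.}\!\int (w(s)-w(t))|s-t|^{-1-\alpha}dt)\,ds$, whose sign requires a genuine computation of a fractional-Laplacian type quantity applied to $w$. The paper sidesteps this entirely by a different bookkeeping: it first proves an \emph{exact} invariance of the full-line seminorm under $f\mapsto |x|^{\alpha-1}f(1/x)$ (Lemma \ref{conformal}), where the cross term cancels identically by the $s\mapsto 1/s$ symmetry of $\int (|s|^{\alpha-1}-1)/|1-s|^{1+\alpha}\,ds$ over inversion-symmetric regions. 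Only after applying the half-line inequality to $g$ and returning to $f$ does a residual appear, and it is the concrete quantity $\frac{1}{\alpha}\int_0^1 |f(x)|^2\,\frac{1-x^\alpha-(1-x)^\alpha}{(x(1-x))^\alpha}\,dx$, whose nonnegativity is \emph{equivalent} to the elementary inequality $1 - x^\alpha - (1-x)^\alpha \ge 0$ on $(0,1)$. That is precisely where the hypothesis $1<\alpha<2$ enters, and it is a content-bearing step, not a bookkeeping triviality. (You attribute the role of $1<\alpha<2$ to endpoint integrability, which is relevant to your Route~2 sketch but is not where the constraint bites in the conformal argument.) Your Route~2, a ground-state substitution with $u(x)=((x-a)(b-x))^{(\alpha-1)/2}$, is a sensible alternative in the spirit of the Frank--Seiringer argument the paper uses for Theorem \ref{frasei}, but as you yourself say the pointwise lower bound on $u\cdot\mathrm{p.v.}\!\int_a^b (u(x)-u(y))|x-y|^{-1-\alpha}dy$ is exactly the hard part, and it is left unverified.

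\end{document}
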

The idea of proving Theorem \ref{interval} is to reduce the problem on the interval to a problem on the
half-line via a fractional linear mapping. The reader may consult \cite{calo} for further examples where
inversion symmetry is used to obtain sharp functional inequalities.

\begin{lemma}[Invariance under fractional linear transformations]  \label{conformal}
Let $f$ be any function in $C^\infty_c(\R \setminus \{0\})$.
Consider the inversion $x \to 1/x$ and set
$$
g(x) = I(f)(x) := |x|^{\alpha -1} f(\frac{1}{x}) \ .
$$
Then $g \in C^\infty_c(\R)$ and 
\begin{equation} \label{invariance}
\int_{\R \times \R} \frac{|g(x) -g(y)|^2}{|x-y|^{1 + \alpha}} dx dy  = \int_{\R \times \R} \frac{|f(x) -f(y)|^2}{|x-y|^{1 + \alpha}} dx dy \ .
\end{equation}
\end{lemma}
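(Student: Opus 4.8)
The plan is to verify directly that the map $I$ sends smooth compactly supported functions away from $0$ to smooth compactly supported functions on all of $\R$, and then to exhibit the Gagliardo double integral as invariant under the substitution $x \mapsto 1/x$, $y \mapsto 1/y$. For the first point, if $f \in C^\infty_c(\R \setminus \{0\})$ with $\supp f \subset \{r \le |x| \le R\}$ for some $0 < r < R$, then $g(x) = |x|^{\alpha-1} f(1/x)$ is supported in $\{1/R \le |x| \le 1/r\}$, which is a compact subset of $\R \setminus \{0\}$, hence certainly of $\R$; smoothness of $g$ on $\R \setminus \{0\}$ is immediate since $x \mapsto 1/x$ and $x \mapsto |x|^{\alpha-1}$ are smooth there, and since $g$ vanishes near $0$ it extends smoothly across the origin. (One should note the two half-lines decouple: $I$ maps functions supported in $(0,\infty)$ to functions supported in $(0,\infty)$ and likewise for $(-\infty,0)$, so the sign of $x$ plays no role and $|x|^{\alpha-1}$ may be treated as $x^{\alpha-1}$ on each piece.)

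For the identity \eqref{invariance}, I would perform the change of variables $x = 1/u$, $y = 1/v$ in the right-hand integral, so that $dx = |u|^{-2}\,du$, $dy = |v|^{-2}\,dv$, and
$$
x - y = \frac{1}{u} - \frac{1}{v} = \frac{v-u}{uv}, \qquad |x-y|^{1+\alpha} = \frac{|u-v|^{1+\alpha}}{|u|^{1+\alpha}|v|^{1+\alpha}} \ .
$$
Thus the measure transforms as
$$
\frac{dx\,dy}{|x-y|^{1+\alpha}} = \frac{|u|^{1+\alpha}|v|^{1+\alpha}}{|u-v|^{1+\alpha}} \cdot \frac{du\,dv}{|u|^2 |v|^2} = |u|^{\alpha-1}|v|^{\alpha-1} \frac{du\,dv}{|u-v|^{1+\alpha}} \ .
$$
It remains to handle the numerator. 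Writing $f(x) = f(1/u)$ and $f(y) = f(1/v)$, the factor $|u|^{\alpha-1}|v|^{\alpha-1}$ must be absorbed into $|f(1/u) - f(1/v)|^2$. The key algebraic step is the identity
$$
|u|^{\alpha-1}|v|^{\alpha-1}\,\bigl|f(\tfrac1u) - f(\tfrac1v)\bigr|^2 = \bigl|\,|u|^{\alpha-1}f(\tfrac1u) - |v|^{\alpha-1}f(\tfrac1v)\,\bigr|^2 + \bigl(|u|^{\alpha-1} - |v|^{\alpha-1}\bigr)^2 f(\tfrac1u)f(\tfrac1v)\cdot(\text{sign corrections}) \ ,
$$
so that the first term on the right is exactly $|g(u) - g(v)|^2$. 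More cleanly: expand $|g(u)-g(v)|^2 = g(u)^2 - 2g(u)g(v) + g(v)^2$ and compare with $|u|^{\alpha-1}|v|^{\alpha-1}|f(1/u)-f(1/v)|^2$; the cross terms $-2g(u)g(v) = -2|u|^{\alpha-1}|v|^{\alpha-1}f(1/u)f(1/v)$ match, and what is left is to show
$$
\int_{\R\times\R} \frac{g(u)^2 + g(v)^2 - |u|^{\alpha-1}|v|^{\alpha-1}\bigl(f(1/u)^2 + f(1/v)^2\bigr)}{|u-v|^{1+\alpha}}\,du\,dv = 0 \ .
$$

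The main obstacle, then, is this last cancellation, which is not a pointwise identity but an integrated one. By symmetry in $u \leftrightarrow v$ it suffices to show $\int g(u)^2 |u-v|^{-1-\alpha} du\,dv = \int |u|^{\alpha-1}|v|^{\alpha-1} f(1/u)^2 |u-v|^{-1-\alpha} du\,dv$; on the right, carry out the $v$-integral first with the substitution $v = 1/s$, which gives $\int |v|^{\alpha-1}|u-v|^{-1-\alpha}\,dv = |u|^{-1-\alpha}\int |s|^{\alpha-1}|1/u - 1/s|^{-1-\alpha}|s|^{-2}\,ds \cdot |u|^{?}$ — one checks this reproduces $\int |u-s|^{-1-\alpha}\,ds$ up to the right power of $|u|$, matching the left side after relabeling. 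The delicacy is that each of these one-variable integrals diverges at the diagonal, so the manipulation must be justified by first regularizing, e.g. restricting to $|u-v| > \epsilon$ or equivalently using that the full Gagliardo integrand is the finite quantity and splitting it only after symmetrization. I would therefore present the argument by writing the difference of the two sides of \eqref{invariance} as a single absolutely convergent integral (legitimate because $f \in C^\infty_c$ makes both sides finite) and then applying the change of variables to the convergent expression as a whole, rather than term by term. The smoothness and compact support of $f$ away from $0$ is exactly what guarantees no boundary terms appear and the inversion is a genuine diffeomorphism of the support.
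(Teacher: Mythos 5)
Your high-level outline matches the paper's: change variables $x\mapsto 1/x$, $y\mapsto 1/y$, observe that the cross terms in the expansion of $|g(u)-g(v)|^2$ coincide pointwise with those of $|u|^{\alpha-1}|v|^{\alpha-1}|f(1/u)-f(1/v)|^2$, and reduce to showing that the leftover ``correction'' integral vanishes. The verification of $g\in C^\infty_c(\R)$ is also fine. But there is a genuine gap in the last and crucial step, and your proposed patch does not close it.

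The problem is that the two integrals you claim it ``suffices to show'' are equal, namely $\int g(u)^2|u-v|^{-1-\alpha}\,du\,dv$ and $\int |u|^{\alpha-1}|v|^{\alpha-1}f(1/u)^2|u-v|^{-1-\alpha}\,du\,dv$, are \emph{both} $+\infty$ (the inner $v$-integral diverges at the diagonal for every $\alpha>0$), so the Fubini manipulation with $v=1/s$ is operating on meaningless quantities; indeed if you track the powers of $|u|$ carefully the formal ``matching'' you assert does not even come out right ($|u|^{2\alpha-2}$ on one side versus $|u|^{\alpha-2}$ on the other). You correctly sense the danger and retreat to ``write the difference as a single absolutely convergent integral,'' which is legitimate — one checks the combined integrand equals
$(|u|^{\alpha-1}-|v|^{\alpha-1})(|u|^{\alpha-1}f(1/u)^2-|v|^{\alpha-1}f(1/v)^2)/|u-v|^{1+\alpha}$,
which is $O(|u-v|^{1-\alpha})$ near the diagonal and therefore integrable for $\alpha<2$ — but then you simply stop: you never explain how to show this convergent integral is zero, and ``applying the change of variables to the convergent expression as a whole'' is what produced it in the first place, so re-applying it gives nothing new.

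The missing ingredient is a regularization adapted to the inversion, not to the diagonal. The paper truncates to the region $R_1\cup R_2$ where $|x/y|>1+\varepsilon$ or $|y/x|>1+\varepsilon$. This set is invariant under $(x,y)\mapsto(1/x,1/y)$, and on it each term in the expansion is separately integrable. The correction term then reduces (via $s=y/x$) to the one-dimensional integral
$\int_{\{|s|>1+\varepsilon\}\cup\{1/|s|>1+\varepsilon\}}\frac{|s|^{\alpha-1}-1}{|1-s|^{1+\alpha}}\,ds$,
whose domain is invariant under $s\mapsto 1/s$ while the integrand (including the Jacobian $ds$) is odd under that map; hence the integral is \emph{identically} zero for every $\varepsilon>0$, and the result follows by letting $\varepsilon\to0$. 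Your suggested truncation $|u-v|>\epsilon$ does not enjoy this invariance, so the exact cancellation is lost and one would instead have to estimate a boundary error; that can be made to work, but it requires a scaling argument and a separate proof that the principal-value integral $\mathrm{p.v.}\!\int(1-|t|^{\alpha-1})|1-t|^{-1-\alpha}\,dt$ vanishes, none of which appears in your sketch. As written, the proof of the identity \eqref{invariance} is incomplete.
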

\begin{proof}
For fixed $ \varepsilon$ consider  the regions
$$
R_1 := \{(x,y) \in \R^2: |\frac{x}{y}| > 1+\varepsilon \} \ , 
$$
and likewise,
$$
R_2 := \{(x,y) \in \R^2:  |\frac{y}{x}| > 1+\varepsilon  \} \ .
$$
Now by changing variables $x \to 1/x$ and $y \to 1/y$ we find that
$$
\int_{R_1 \cup R_2}  \frac{|f(x)-f(y)|^2}{|x-y|^{1 +\alpha}} dx dy
= \int_{R_1 \cup R_2}  \frac{|f(1/x)-f(1/y)|^2}{|x-y|^{1 +\alpha}} |x|^{\alpha-1} |y|^{\alpha-1}dx dy
$$
$$
= \int_{R_1 \cup R_2}  \frac{| g(x)-g(y)|^2}{|x-y|^{1 +\alpha}} dx dy
$$
$$
+\int_{R_1 \cup R_2}  \frac{|f(1/x)|^2(|x|^{\alpha-1} |y|^{\alpha-1} -|x|^{2(\alpha-1)}) +|f(1/y)|^2(|x|^{\alpha-1} |y|^{\alpha-1} -|y|^{2(\alpha-1)})}{|x-y|^{1 +\alpha}} dx dy
$$
which, by symmetry under exchange of $x$ and $y$,
$$
=  \int_{R_1 \cup R_2}  \frac{| g(x)-g(y)|^2}{|x-y|^{1 +\alpha}} dx dy
+2\int_{R_1 \cup R_2}  \frac{|f(1/x)|^2(|x|^{\alpha-1} |y|^{\alpha-1} -|x|^{2(\alpha-1)}) }{|x-y|^{1 +\alpha}} dx dy \ .
$$
We can write the second term as
$$
\int_\R |f(1/x)|^2  |x|^{\alpha-2} \int_{\{  |s| > 1+ \varepsilon\} \cup \{ \frac{1}{|s|} > 1+ \varepsilon\}} \frac{|s|^{\alpha -1} -1}{|1-s|^{1+\alpha}} ds \ .
$$
The integral
$$
\int_{\{  |s| > 1+ \varepsilon\} \cup \{  \frac{1}{|s|} > 1+ \varepsilon\}} \frac{|s|^{\alpha -1} -1}{|1-s|^{1+\alpha}} ds
$$
$$
= \int_{\{ |s| > 1+ \varepsilon\}} \frac{|s|^{\alpha -1} -1}{|1-s|^{1+\alpha}} ds
+\int_{ \{ \frac{1}{|s|}> 1+ \varepsilon\}} \frac{|s|^{\alpha -1} -1}{|1-s|^{1+\alpha}} ds 
$$
and by changing the variable $s \to 1/s$ in the last integral we find that this sum vanishes. Letting $\varepsilon \to 0$ yields
(\ref{invariance}).

\end{proof}
\begin{proof}[Proof of Theorem \ref{interval}]
By translation and scaling it suffices to prove the result for the interval $(0,1)$. 
Let $f \in C^\infty_c((0,1))$. We have to show that
\begin{equation} \label{fundamental}
\frac{1}{2}\int_{(0,1) \times(0,1)} \frac{|f(x) - f(y)|^2}{|x-y|^{1+\alpha}} dx dy \ge \kappa_{1,\alpha}
\int_0^1 |f(x)|^2 \left(\frac{1}{x} + \frac{1}{1-x}\right)^\alpha dx \ .
\end{equation}
Set
$$
g(x) = |x+1|^{\alpha -1} f(\frac{1}{1+x}) \ .
$$
Clearly, $g \in C^\infty_c((0,\infty))$. Note that
$$
g(x) = I(f)(x+1)	
$$
and hence we may use Lemma \ref{conformal} and find that
\begin{eqnarray}
& &\frac{1}{2}\int_0^1\int_0^1 \frac{|f(x) - f(y)|^2}{|x-y|^{1+\alpha} } dx dy+  \int_0^1 dx |f(x)|^2 \int_{\R \setminus (0,1)} \frac{1}{|x-y|^{1+\alpha}} dy
\nonumber \\
&=& \frac{1}{2}\int_{\R \times \R} \frac{|f(x) - f(y)|^2}{|x-y|^{1+\alpha} } dx dy =  \frac{1}{2}\int_{\R \times \R} \frac{|g(x) - g(y)|^2}{|x-y|^{1+\alpha} } dx dy 
\nonumber \\
&=&\frac{1}{2} \int_0^\infty  \int_0^\infty \frac{|g(x) - g(y)|^2}{|x-y|^{1+\alpha} } dx dy  + \int_0^\infty dx |g(x)|^2 \int_{-\infty}^0 \frac{1}{|x-y|^{1+\alpha} } dy
\end{eqnarray}
Some of the integrals are easily evaluated and yield
\begin{eqnarray}
& &\frac{1}{2} \int_0^1\int_0^1 \frac{|f(x) - f(y)|^2}{|x-y|^{1+\alpha} } dx dy = \frac{1}{2} \int_0^\infty  \int_0^\infty \frac{|g(x) - g(y)|^2}{|x-y|^{1+\alpha} } dx dy
\nonumber \\
&+& \frac{1}{\alpha} \int_0^\infty  \frac{|g(x)|^2}{x^\alpha} dx - \frac{1}{\alpha}  \int_0^1 |f(x)|^2 \left(x^{-\alpha} + (1-x)^{-\alpha}\right) dx \ .
\end{eqnarray}
Using the sharp Hardy inequality of Bogdan and Dyda \cite{BD} on the half-line yields
\begin{eqnarray}
& &\frac{1}{2} \int_0^1\int_0^1 \frac{|f(x) - f(y)|^2}{|x-y|^{1+\alpha} } dx dy \ge \kappa_{1,\alpha} \int _0^\infty \frac{|g(x)|^2}{x^\alpha} dx \nonumber \\
&+& \frac{1}{\alpha} \int_0^\infty  \frac{|g(x)|^2}{x^\alpha} dx - \frac{1}{\alpha}  \int_0^1 |f(x)|^2 \left(x^{-\alpha} + (1-x)^{-\alpha}\right) dx
\end{eqnarray}
Changing variables, i.e., expressing everything in terms of the function $f$, we arrive at the inequality
\begin{eqnarray}\label{interesting}
& &\frac{1}{2}\int_0^1\int_0^1 \frac{|f(x) - f(y)|^2}{|x-y|^{1+\alpha} } dx dy  \nonumber \\
&\ge&   \kappa_{1,\alpha} \int _0^1 |f(x)|^2 \left( \frac{1}{x(1-x)}\right)^\alpha dx+
\frac{1}{\alpha} \int _0^1 |f(x)|^2 \frac{1-x^\alpha -(1-x)^\alpha}{(x(1-x))^\alpha} dx
\end{eqnarray}
Finally, we note that for $1 < \alpha < 2$ 
$$
1-x^\alpha -(1-x)^\alpha \ge 0
$$
which proves the inequality (\ref{fundamental}). 
\end{proof}
Theorem \ref{interval} generalizes easily to open sets on the real line.
  \begin{corollary} \label{open}
 Let $J \subset \R$ be an open set and $1 < \alpha < 2$. Then for any $f \in C^\infty_c(J)$  
 \begin{equation}
 \frac{1}{2}\int_{J \times J} \frac{|f(x) -f(y)|^2}{|x-y|^{1+\alpha}} dx dy \ge  \kappa_{1,\alpha}  \int_J |f(x)|^2 \left( \frac{1}{d_J(x)} + \frac{1}{\delta_J(x) } \right)^\alpha dx \ ,
 \end{equation}
 where $\delta_J(x)$ is defined in (\ref{delta}).
 \end{corollary}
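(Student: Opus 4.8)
The plan is to deduce Corollary \ref{open} from the single–interval estimate of Theorem \ref{interval} by decomposing $J$ into its connected components. Since $J\subset\R$ is open, write $J=\bigcup_k I_k$ as an at most countable disjoint union of open intervals $I_k=(a_k,b_k)$, allowing $a_k=-\infty$ or $b_k=+\infty$. For each $k$ let $f_k$ be the restriction of $f$ to $I_k$, extended by zero to the rest of $\R$; because $\supp f$ is a compact subset of $J$ it meets only finitely many $I_k$, so $f_k\in C^\infty_c(I_k)$ for every $k$ and $f_k\equiv 0$ for all but finitely many $k$. The first step is to throw away the off–diagonal contributions: the integrand $|f(x)-f(y)|^2|x-y|^{-1-\alpha}$ is nonnegative, the diagonal blocks $I_k\times I_k$ are pairwise disjoint subsets of $J\times J$, and on $I_k\times I_k$ one has $f=f_k$, hence
\[
\frac12\int_{J\times J}\frac{|f(x)-f(y)|^2}{|x-y|^{1+\alpha}}\,dx\,dy
\ \ge\ \sum_k \frac12\int_{I_k\times I_k}\frac{|f_k(x)-f_k(y)|^2}{|x-y|^{1+\alpha}}\,dx\,dy .
\]

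Next I would apply Theorem \ref{interval} to each bounded component $I_k=(a_k,b_k)$, bounding the $k$-th term below by $\kappa_{1,\alpha}\int_{I_k}|f_k(x)|^2\bigl(\tfrac{1}{x-a_k}+\tfrac{1}{b_k-x}\bigr)^\alpha dx$. For an unbounded (half-line) component the very same bound, read with the convention $1/\infty=0$, is precisely the one–dimensional Bogdan–Dyda inequality (\ref{bogdandyda}) after a translation, so it is available there as well; and if some component equals all of $\R$ (i.e. $J=\R$) that component contributes nothing to the right–hand side of the Corollary and may simply be discarded. Thus in all cases
\[
\frac12\int_{I_k\times I_k}\frac{|f_k(x)-f_k(y)|^2}{|x-y|^{1+\alpha}}\,dx\,dy
\ \ge\ \kappa_{1,\alpha}\int_{I_k}|f_k(x)|^2\Bigl(\frac{1}{x-a_k}+\frac{1}{b_k-x}\Bigr)^\alpha dx .
\]

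The only genuinely new ingredient is a pointwise comparison of the distance functions. For $x\in I_k$, since $I_k$ is a connected component of the open set $J$ its finite endpoints lie outside $J$, and all of $(a_k,x)$ and $(x,b_k)$ lies in $J$; therefore $d_J(x)=\min(x-a_k,\,b_k-x)$ and $\delta_J(x)\ge\max(x-a_k,\,b_k-x)$ (with the obvious reading when an endpoint is infinite). Hence
\[
\frac{1}{d_J(x)}+\frac{1}{\delta_J(x)}
\ \le\ \frac{1}{\min(x-a_k,b_k-x)}+\frac{1}{\max(x-a_k,b_k-x)}
\ =\ \frac{1}{x-a_k}+\frac{1}{b_k-x},
\]
and since $t\mapsto t^\alpha$ is increasing on $[0,\infty)$ this inequality survives raising to the power $\alpha$. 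Combining the three displays, using $f_k=f$ on $I_k$ and $\int_J=\sum_k\int_{I_k}$ (the component endpoints form a null set), produces the asserted inequality. I do not expect any real obstacle: the argument is a routine localization, and the only points that need a little care are the bookkeeping for unbounded components and the elementary verification of the formulas for $d_J$ and $\delta_J$ on a component of $J$.
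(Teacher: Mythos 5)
Your proof is correct and follows essentially the same route as the paper's: decompose $J$ into its connected components $I_k$, discard the off-diagonal blocks $I_j\times I_k$ ($j\ne k$), apply Theorem \ref{interval} on each component, and then replace $\bigl(\tfrac{1}{d_{I_k}(x)}+\tfrac{1}{\delta_{I_k}(x)}\bigr)^\alpha$ by the smaller quantity $\bigl(\tfrac{1}{d_J(x)}+\tfrac{1}{\delta_J(x)}\bigr)^\alpha$, which holds because $d_J=d_{I_k}$ on $I_k$ while $\delta_J\ge\delta_{I_k}$. The only difference is cosmetic: you are more explicit about the unbounded components (invoking the half-line Bogdan--Dyda inequality directly with the convention $1/\infty=0$), whereas the paper's three-line display leaves this to the reader; either way the details are routine and the argument is the same.
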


\begin{proof}
Since any open set $J \subset \R$ is a countable union of disjoint intervals $I_k$ we find, using Theorem \ref{interval}, that
\begin{eqnarray}
& &\frac{1}{2} \int_J\int_J \frac{|f(x) - f(y)|^2}{|x-y|^{1+\alpha} } dx dy \ge \frac{1}{2} \sum_{k=1}^\infty \int_{I_k} \int_{I_k}  \frac{|f(x) - f(y)|^2}{|x-y|^{1+\alpha} } dx dy
\nonumber \\
&\ge& \sum_{k=1}^\infty  \kappa_{1,\alpha} \int_{I_k}  |f(x)|^2 \left(\frac{1}{d_{I_k}(x)} + \frac{1}{\delta_{I_k}(x)}\right)^\alpha dx \nonumber \\
&\ge& \kappa_{1,\alpha} \int_J |f(x)|^2 \left(\frac{1}{d_J(x)} + \frac{1}{\delta_J(x) }\right)^\alpha dx \ .
\end{eqnarray}
\end{proof}

\

\begin{lemma}[Reduction to one dimension] \label{reduction}
Let $\Omega$ be any region in $\R^n$ and assume that $f \in C^\infty_c(\Omega)$. Then
\begin{eqnarray} \label{averagetwo}
& &\int_{\Omega \times \Omega} \frac{|f(x) - f(y)|^p}{|x-y|^{n+\alpha}} dx dy  \nonumber\\
&=&\frac{1}{2} \int_{\Sp^{n-1}} dw \int_{\{x: x \cdot w = 0\}} d{\mathcal L}_w(x) \int_{x+sw \in \Omega}  ds \int_{x+tw \in \Omega}dt
\frac{|f(x+sw) - f(x+tw)|^p}{{|s-t|}^{1+\alpha}} 
\end{eqnarray}
where ${\mathcal L}_w$ denotes the $(n-1)$ dimensional Lebesgue measure on the plane $x \cdot w =0$.
\end{lemma}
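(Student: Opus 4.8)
The plan is to represent the double integral over $\Omega \times \Omega$ as an integral over all pairs of points lying on a common line, indexed by the direction of that line and by a base point on the hyperplane orthogonal to it. First I would write the generic pair $(x,y) \in \Omega \times \Omega$ with $x \ne y$ as $x = \xi + sw$, $y = \xi + tw$, where $w = (y-x)/|y-x| \in \Sp^{n-1}$ is the unit direction, $\xi$ is the orthogonal projection of $x$ (equivalently of $y$) onto the hyperplane $\{z : z \cdot w = 0\}$, and $s = x \cdot w$, $t = y \cdot w$ are the signed coordinates along $w$. Note that $|x - y| = |s - t|$, so the kernel $|x-y|^{-(n+\alpha)}$ becomes $|s-t|^{-(n+\alpha)}$. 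The factor $|s-t|^{n-1}$ that converts $|s-t|^{-(n+\alpha)}$ into the desired $|s-t|^{-(1+\alpha)}$ will be supplied by the Jacobian of the change of variables, and the averaging over $w \in \Sp^{n-1}$ together with the $\frac12$ will account for the fact that each unordered pair $\{x,y\}$ determines the line — and hence the pair $(\xi, w)$ up to the sign ambiguity $w \mapsto -w$ — but is counted twice in the symmetric domain $\Omega \times \Omega$.

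The key computational step is the coarea-type identity
\[
\int_{\R^n \times \R^n} F(x,y)\, dx\, dy
= \frac{1}{2} \int_{\Sp^{n-1}} dw \int_{\{x \cdot w = 0\}} d{\mathcal L}_w(\xi) \int_{\R} ds \int_{\R} dt\; |s-t|^{n-1}\, F(\xi + sw, \xi + tw),
\]
valid for any nonnegative measurable $F$ on $\R^n \times \R^n$. I would prove this by fixing $x$ and writing the inner integral $\int_{\R^n} F(x,y)\,dy$ in polar coordinates centered at $x$: $y = x + \rho\, \omega$ with $\rho > 0$, $\omega \in \Sp^{n-1}$, so $dy = \rho^{n-1} d\rho\, d\omega$. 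Then I integrate over $x \in \R^n$, and for each fixed $\omega$ split $x = \xi + s\omega$ with $\xi \perp \omega$ (so $dx = d{\mathcal L}_\omega(\xi)\, ds$); setting $t = s + \rho$ turns $\rho^{n-1} d\rho$ into $|s-t|^{n-1} dt$ with $t > s$. Symmetrizing in $(s,t)$ — equivalently, observing that replacing $\omega$ by $-\omega$ interchanges the roles and recovers the region $t < s$ — produces the full unrestricted $s,t$ integration at the cost of the factor $\frac12$. Applying this with
\[
F(x,y) = \chi_\Omega(x)\, \chi_\Omega(y)\, \frac{|f(x) - f(y)|^p}{|x-y|^{n+\alpha}},
\]
and using $\chi_\Omega(\xi + sw) \chi_\Omega(\xi + tw) = 1$ exactly when $\xi + sw \in \Omega$ and $\xi + tw \in \Omega$ (which is how the inner integration domains $\{s : \xi + sw \in \Omega\}$ and $\{t : \xi + tw \in \Omega\}$ arise), together with $|x-y|^{n+\alpha} = |s-t|^{n+\alpha}$ and the cancellation $|s-t|^{n-1}/|s-t|^{n+\alpha} = |s-t|^{-(1+\alpha)}$, gives exactly \eqref{averagetwo} (after renaming $\xi$ as $x$, matching the paper's notation).

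The only genuine subtlety — the "main obstacle" such as it is — is bookkeeping the factor of $\frac12$ and the $w \leftrightarrow -w$ symmetry correctly: one must be careful that in the polar-coordinates step the direction $\omega$ ranges over the full sphere while $\rho$ ranges only over positives, so that the pair $(x,y)$ and the pair $(y,x)$ correspond to $(\xi, \omega, s, t)$ with $t > s$ and to $(\xi, -\omega, \ldots)$ respectively; symmetrizing restores the unrestricted double integral in $s$ and $t$ but halves the sphere integral's contribution, which is precisely the $\frac12$ appearing on the right-hand side of \eqref{averagetwo}. Since $f \in C^\infty_c(\Omega)$ the integrand $|f(x)-f(y)|^p |x-y|^{-(n+\alpha)}$ is locally integrable near the diagonal (as $|f(x)-f(y)| \lesssim |x-y|$ and $p > 1 > \alpha - 1$, so the singularity is integrable once $\alpha < p$), so all integrals are absolutely convergent and Tonelli/Fubini applies throughout, justifying every interchange; no further analytic input is needed.
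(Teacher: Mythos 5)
Your proof is correct and follows essentially the same route as the paper's: both pass to polar coordinates $y = x + rw$ centered at $x$, decompose $x$ into components parallel and perpendicular to $w$ (with $d\mathcal{L}_w$ the measure on the orthogonal hyperplane), shift $t = s + r$, and symmetrize over $w \mapsto -w$ to produce the factor $\tfrac12$. Your added remark on local integrability near the diagonal (requiring $\alpha < p$) is a reasonable justification for Tonelli, which the paper leaves implicit.
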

\begin{proof} We write the expression
$$
I_\Omega (f):=  \int_{\Omega \times \Omega} \frac{|f(x) - f(y)|^p}{|x-y|^{n+\alpha}} dx dy
$$
in the form
$$
\int_\Omega dx  \int_ {\{ x+z \in \Omega\}}  dz \frac{|f(x) - f(x+z)|^p}{|z|^{n+\alpha}}
$$
and using polar coordinates $z = r w$ we arrive at the expression
\begin{eqnarray}
I_\Omega (f) &=& \int_\Omega dx  \int_{\Sp^{n-1}} dw \int_ {\{ x+rw \in \Omega \ ,\  r >0\}}  dr \frac{|f(x) - f(x+rw)|^p}{r^{1+\alpha}} \ , \nonumber \\
&=& \frac{1}{2} \int_{\Sp^{n-1}} dw \int_\Omega dx   \int_ {\{ x+hw \in \Omega \}}  dh \frac{|f(x) - f(x+hw)|^p}{|h|^{1+\alpha}}  \ .
\end{eqnarray}
Thus, the domain of integration in the innermost integral is the line $x +hw$ intersected with the domain $\Omega$. Splitting the variable $x$ into
a component perpendicular to $w$ and parallel to $w$, i.e., replacing $x$ by $x+sw$, where $x \cdot w = 0$, we arrive at
$$
\frac{1}{2} \int_{\Sp^{n-1}} dw\int_{\{ x : x \cdot w=0\}} d{\mathcal L}_w(x)  \int_{\{x+sw \in \Omega\}} ds  \int_ {\{ x+(s+h)w \in \Omega \}}  dh \frac{|f(x+sw) - f(x+(s+h)w)|^p}{|h|^{1+\alpha}} 
$$
The variable change $t = s+h$ yields (\ref{averagetwo}).
\end{proof}

\begin{proof}[Proof of Theorem \ref{main}]
By Lemma \ref{reduction} and Corollary \ref{open} we find that
\begin{eqnarray}
& &\frac{1}{2}\int_{\Omega \times \Omega} \frac{|f(x) - f(y)|^2}{|x-y|^{n+\alpha}} dx dy \nonumber \\
&=&\frac{1}{4} \int_{\Sp^{n-1}} dw \int_{\{x: x \cdot w = 0\}} d{\mathcal L}_w(x) \int_{x+sw \in \Omega}  ds \int_{x+tw \in \Omega}dt
\frac{|f(x+sw) - f(x+tw)|^2}{{|s-t|}^{1+\alpha}}  \nonumber \\
&\ge&\kappa_{1,\alpha}  \frac{1}{2} \int_{\Sp^{n-1}} dw \int_{\{x: x \cdot w = 0\}} d{\mathcal L}_w(x) \int _{x+sw \in \Omega}  ds |f(x+sw)|^2 \left[\frac{1}{d_w(x+sw)}+ \frac{1}{\delta_w(x+sw)}\right]^\alpha  \nonumber \\
&=& \kappa_{1,\alpha}  \frac{1}{2} \int_{\Sp^{n-1}} dw  \int_\Omega |f(x)|^2\left[\frac{1}{d_{w,\Omega} (x)}+ \frac{1}{\delta_{w, \Omega}(x)} 
\right]^\alpha dx
 = \kappa_{n,\alpha}  \int_\Omega \frac{|f(x)|^2}{M_\alpha (x)^\alpha} dx \ ,
\end{eqnarray}
where we have used (\ref{integral}) in the last equation.  It remains to show that the constant $\kappa_{n, \alpha}$ in the inequality (\ref{fundiconvex}) 
is best possible. Pick a hyper-plane $H$ that is tangent to $\Omega$ at a point $P$. Such hyper-planes exist  since $\Omega$ is convex. It was shown in \cite{BD}
that the constant for the half-space problem, $\kappa_{n,\alpha}$, is best possible by constructing a sequence of trial functions. Transplanting these trial functions to $\Omega$ near the point $P$ one can show that $\kappa_{n,\alpha}$ is also optimal for  (\ref{fundiconvex}). 
The actual proof is a straightforward imitation of the proof of Theorem 5 in \cite{MMP} and we omit the details. 
\end{proof}
We finally come to the proof of Theorem \ref{frasei}. We thank Rupert Frank and Robert Seiringer for allowing us to present their argument.
\begin{theorem}
Let $1 < p < \infty $ and $1 < \alpha < p$. Then for all smooth functions $f$ with $f(0)=0$,
$$
\int_0^1 \int_0^1 \frac{|f(x)-f(y)|^p}{|x-y|^{1+\alpha}} \,dx\,dy 
\geq \mathcal D_{1,p,\alpha} \int_0^1 \frac{|f(x)|^p}{x^{\alpha}}\,dx \ .
$$
\end{theorem}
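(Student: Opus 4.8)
The plan is to mimic the inversion argument used for $p=2$ in Theorem~\ref{interval}, but now adapted to the $L^p$ setting. First I would establish the $p$-analogue of Lemma~\ref{conformal}: for $g(x) = I_p(f)(x) := |x|^{\frac{\alpha-1}{p-1}\cdot\frac{p-1}{1}}\,$\,--- more precisely the correct weight turns out to be $|x|^{(\alpha-1)/p \cdot ?}$; one checks by the same change of variables $x\to 1/x$, $y\to 1/y$ that the bilinear form $\iint |g(x)-g(y)|^p |x-y|^{-1-\alpha}$ is left invariant provided the exponent is chosen so that the extra boundary term, which now involves $\int (|s|^{\beta}-1)/|1-s|^{1+\alpha}\,ds$ with $\beta$ depending on $p$ and $\alpha$, vanishes after $s\to 1/s$. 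The antisymmetry forcing that integral to vanish fixes the weight; I expect the right choice to be $g(x) = |x|^{(\alpha-1)\cdot\frac{?}{?}} f(1/x)$ with the exponent determined by the homogeneity of $|x-y|^{1+\alpha}$ split evenly, i.e. weight $|x|^{(1+\alpha-2)/p}\cdot$ correction; the key algebraic point is that $|x|^{\alpha-1}$ in the $p=2$ case gets replaced by something whose $p$-th power reproduces the same scaling of the measure.

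Once the inversion invariance is in hand, I would run exactly the computation in the proof of Theorem~\ref{interval}: set $g(x)$ to be the inverted-and-translated version of $f$ supported in $(0,\infty)$, expand $\iint_{\R\times\R}$ into the $(0,1)\times(0,1)$ piece plus the tail integrals $\int_0^1 |f(x)|^p \int_{\R\setminus(0,1)} |x-y|^{-1-\alpha}\,dy$ on the $f$ side and $\int_0^\infty |g(x)|^p \int_{-\infty}^0 |x-y|^{-1-\alpha}\,dy$ on the $g$ side, apply the known sharp half-line inequality $\int_0^\infty\!\int_0^\infty |g(x)-g(y)|^p|x-y|^{-1-\alpha} \ge \mathcal{D}_{1,p,\alpha}\int_0^\infty |g(x)|^p x^{-\alpha}\,dx$ from \cite{FS}, and then translate everything back to $f$ via the substitution. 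This will produce
\[
\int_0^1\!\int_0^1 \frac{|f(x)-f(y)|^p}{|x-y|^{1+\alpha}}\,dx\,dy \ \ge\ \mathcal{D}_{1,p,\alpha}\int_0^1 \frac{|f(x)|^p}{(x(1-x))^{\alpha}}\,dx \ +\ \frac{1}{\alpha}\int_0^1 |f(x)|^p\,\frac{1-x^\alpha-(1-x)^\alpha}{(x(1-x))^\alpha}\,dx,
\]
and since $x^{-\alpha} \le (x(1-x))^{-\alpha}$ on $(0,1)$ together with $1-x^\alpha-(1-x)^\alpha \ge 0$ for $1<\alpha<2$ (and more generally the correction term being nonnegative for the relevant range of $\alpha$), the desired inequality with constant $\mathcal{D}_{1,p,\alpha}$ follows. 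The condition $f(0)=0$ is what makes the translated function $g$ compactly supported away from the origin, so the inversion is legitimate.

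I would then check sharpness by transplanting the near-boundary extremizing sequence for the half-line problem (the one that realizes $\mathcal{D}_{1,p,\alpha}$ in \cite{FS}) to a neighborhood of the endpoint $0$ inside $(0,1)$; since near $x=0$ the weight $(1/x + 1/(1-x))^\alpha \sim x^{-\alpha}$, the same trial functions show the constant cannot be improved, exactly as in the $p=2$ case and as done in \cite{MMP}.

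\textbf{Main obstacle.} The delicate step is the first one: pinning down the correct weight $|x|^{\gamma}$ in the definition of $I_p$ and verifying that the leftover boundary integral genuinely has the antisymmetry $s \leftrightarrow 1/s$ that forces it to cancel. For $p=2$ this worked because $|g(x)-g(y)|^2$ expands into a clean sum of a "good" square plus cross terms linear in $|f|^2$ with coefficients that are differences of powers of $|x|,|y|$; for general $p$ there is no such exact expansion, so the argument cannot proceed by literal algebraic identity. Instead one must use the pointwise identity $|g(x)-g(y)|^p = |a^{\phantom{*}}|^{p}\,\big| f(1/x)\phi(x) - f(1/y)\phi(y)\big|^p$ after factoring the weight, and then exploit that the \emph{substitution} $x\to 1/x,\ y\to 1/y$ maps the integral over $R_1\cup R_2$ to itself while converting the $f$-kernel into the $g$-kernel; the weight exponent is forced by requiring the Jacobian times the kernel to match. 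I expect this to go through with $g(x) = |x|^{(\alpha-p)/p}\,$\,--- no: the cleanest route is to observe that the map $f \mapsto g$ with $g(x)=|x|^{\alpha/p - 1/p'}\cdots$ is the unitary realizing the inversion symmetry of the quadratic form's $L^p$-analogue, and the symmetry is then automatic rather than computational; making that statement precise, and confirming the exponent reduces to $\alpha-1$ when $p=2$, is the crux.
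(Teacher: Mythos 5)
The paper proves this theorem by a completely different route: it invokes the nonlinear ground state representation. Specifically, it takes $\omega(x)=x^{(\alpha-1)/p}$, uses the computation from \cite[Lemma 2.4]{FS} that
\[
2\int_0^\infty (\omega(x)-\omega(y))|\omega(x)-\omega(y)|^{p-2}\frac{dy}{|x-y|^{1+\alpha}}
=\frac{\mathcal D_{1,p,\alpha}}{x^\alpha}\,\omega(x)^{p-1},
\]
observes that for $x\in[0,1]$ the contribution to this principal-value integral from $y\in(1,\infty)$ is nonpositive (since $\omega$ is increasing), so restricting the integral to $(0,1)$ produces a potential $V(x)\ge \mathcal D_{1,p,\alpha}x^{-\alpha}$, and then applies \cite[Prop.~2.2]{FrSe1} to conclude. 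No inversion, no reduction to the half-line inequality on $g$, no boundary-term bookkeeping.

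Your proposal instead tries to port the $p=2$ inversion argument (Lemma~\ref{conformal}) to general $p$, and you yourself flag the decisive obstruction: the $p=2$ proof of Lemma~\ref{conformal} relies on the literal quadratic expansion $|g(x)-g(y)|^2 = |g(x)|^2 - 2g(x)g(y)+|g(y)|^2$, which makes the cross terms cancel exactly under the inversion so that the leftover is a pure ``diagonal'' correction with the $s\leftrightarrow 1/s$ antisymmetry. For $p\neq 2$ there is no such decomposition of $|g(x)-g(y)|^p$ into cross and diagonal pieces, so the identity of Lemma~\ref{conformal} genuinely fails as stated; it is not merely that a weight exponent remains to be pinned down. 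Your closing paragraph essentially concedes this (``the argument cannot proceed by literal algebraic identity'') and then gestures at a ``unitary realizing the inversion symmetry of the quadratic form's $L^p$-analogue,'' but no such operator makes the $L^p$ Gagliardo form exactly inversion-invariant, and you do not produce one. In addition, a smaller issue: the theorem is stated for smooth $f$ with only $f(0)=0$, whereas your inversion-plus-translation construction of $g$ implicitly needs $f$ to vanish near both endpoints to land in $C_c^\infty((0,\infty))$; so even granting the inversion step you would be proving a statement for a strictly smaller class of functions. The gap in your proposal is therefore real, and the paper's ground-state-representation argument is not a cosmetic variant of yours but a structurally different (and for $p\neq 2$, the workable) approach.
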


\begin{proof}
Let $\omega(x)=x^{(\alpha-1)/p}$. Then by \cite[Lemma 2.4]{FS}
\begin{equation*}
 2 \int_0^\infty 
\left(\omega(x) -\omega(y)\right)
\left|\omega(x) - \omega(y) \right|^{p-2} \frac{dy}{|x-y|^{1+\alpha}}
= \frac{\mathcal D_{1,p,\alpha}}{x^{\alpha}} \ \omega(x)^{p-1}
\end{equation*}
where the integral is understood in principal value sense. Since
$$
\int_1^\infty 
\left(\omega(x) -\omega(y)\right)
\left|\omega(x) - \omega(y) \right|^{p-2} \frac{dy}{|x-y|^{1+\alpha}} \leq 0
\quad \text{for}\ x\in [0,1] \,,
$$
we conclude that
$$
V(x) := \frac{2}{\omega(x)^{p-1}} \int_0^1
\left(\omega(x) -\omega(y)\right)
\left|\omega(x) - \omega(y) \right|^{p-2} \frac{dy}{|x-y|^{1+\alpha}}
\geq \frac{\mathcal D_{1,p,\alpha}}{x^{\alpha}}
\quad \text{for}\ x\in [0,1] \,.
$$
Now \cite[Prop. 2.2]{FrSe1} implies that
$$
\int_0^1 \int_0^1 \frac{|f(x)-f(y)|^p}{|x-y|^{1+\alpha}} \,dx\,dy 
\geq \int_0^1 V(x) |f(x)|^p \,dx \,,
$$
which proves the claim.
\end{proof}
An easy consequence is 
\begin{theorem}  \label{pinterval} 
Let $f \in C^\infty_c((a,b))$. Then for all $1 < p < \infty $ and $1 < \alpha  < p$ we have
 \begin{equation} 
\frac{1}{2}\int_{(a,b) \times (a,b)} \frac{|f(x) -f(y)|^p}{|x-y|^{1+\alpha}} dx dy \ge  \mathcal{D}_{1,p,\alpha}  \int_a^b   \frac{|f(x)|^p}{\min\{(x-a),(b-x)\}^{\alpha}} dx  \ .
 \end{equation}
 
 \end{theorem}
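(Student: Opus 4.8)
The plan is to obtain this from the half-line inequality just proved --- the one valid for smooth $f$ on $(0,1)$ with $f(0)=0$, with the weight $x^{-\alpha}$ --- by splitting the interval at its midpoint. As a preliminary remark, both sides of the asserted inequality transform the same way under translations and under dilations $x\mapsto\lambda x$: the Gagliardo double integral and the weighted $L^p$-integral each acquire a factor $\lambda^{1-\alpha}$. Hence, after an affine change of variables, it suffices to treat the interval $(0,1)$, i.e. to show for $f\in C^\infty_c((0,1))$ that
\begin{equation*}
\int_{(0,1)\times(0,1)}\frac{|f(x)-f(y)|^p}{|x-y|^{1+\alpha}}\,dx\,dy\ \ge\ \mathcal{D}_{1,p,\alpha}\int_0^1\frac{|f(x)|^p}{\min\{x,1-x\}^\alpha}\,dx .
\end{equation*}

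Next I would split the square $(0,1)^2$ into $(0,\tfrac12)^2$, $(\tfrac12,1)^2$, and the two off-diagonal rectangles $(0,\tfrac12)\times(\tfrac12,1)$ and $(\tfrac12,1)\times(0,\tfrac12)$. Since the integrand is nonnegative, discarding the off-diagonal pieces gives
\begin{equation*}
\int_{(0,1)^2}\frac{|f(x)-f(y)|^p}{|x-y|^{1+\alpha}}\,dx\,dy\ \ge\ \int_{(0,1/2)^2}\frac{|f(x)-f(y)|^p}{|x-y|^{1+\alpha}}\,dx\,dy\ +\ \int_{(1/2,1)^2}\frac{|f(x)-f(y)|^p}{|x-y|^{1+\alpha}}\,dx\,dy .
\end{equation*}
The restriction of $f$ to $(0,\tfrac12)$ is smooth and vanishes, with all its derivatives, at the left endpoint $0$ (nothing is imposed at the midpoint). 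Rescaling $(0,\tfrac12)$ to $(0,1)$ multiplies both sides by the same power of the scaling factor, so the half-line theorem applies and yields
\begin{equation*}
\int_{(0,1/2)^2}\frac{|f(x)-f(y)|^p}{|x-y|^{1+\alpha}}\,dx\,dy\ \ge\ \mathcal{D}_{1,p,\alpha}\int_0^{1/2}\frac{|f(x)|^p}{x^\alpha}\,dx .
\end{equation*}

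Applying the same estimate to the reflected function $x\mapsto f(1-x)$ --- under which the double integral over $(0,1)^2$ is unchanged while the relevant endpoint becomes $1$ --- gives the matching bound on $(\tfrac12,1)$ with weight $(1-x)^{-\alpha}$. Since $\min\{x,1-x\}=x$ on $(0,\tfrac12)$ and $\min\{x,1-x\}=1-x$ on $(\tfrac12,1)$, adding the two half-estimates reconstitutes exactly $\int_0^1|f(x)|^p\,\min\{x,1-x\}^{-\alpha}\,dx$, and undoing the initial affine normalization produces the statement on a general interval $(a,b)$ with $\min\{x-a,b-x\}$ in place of $\min\{x,1-x\}$.

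I do not expect a genuine obstacle here --- this is why the paper bills it as ``an easy consequence.'' The two points needing (minor) care are: first, that the half-line theorem requires $f$ to vanish at only one endpoint, which is precisely what is available for a restriction to a half-interval; and second, that one must track the dilation exponents carefully enough that the constant reproduced after rescaling each half back to $(0,1)$ is exactly $\mathcal{D}_{1,p,\alpha}$ and not some multiple of it. One can moreover check that this constant is optimal, by transplanting the half-line extremizing sequence near an endpoint of $(a,b)$, where $\min\{x-a,b-x\}$ coincides with the distance to that endpoint; but this is not part of the assertion.
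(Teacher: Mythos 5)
Your argument is sound, but you should notice that what it proves is
\begin{equation*}
\int_{(a,b)\times(a,b)}\frac{|f(x)-f(y)|^p}{|x-y|^{1+\alpha}}\,dx\,dy \ \ge\ \mathcal D_{1,p,\alpha}\int_a^b\frac{|f(x)|^p}{\min\{x-a,b-x\}^{\alpha}}\,dx\,,
\end{equation*}
without the factor $\tfrac12$ that appears on the left of the statement as printed. Your phrase ``it suffices to show'' the version on $(0,1)$ \emph{without} the $\tfrac12$ is logically backwards relative to what is literally written: the inequality you establish is weaker by a factor of $2$ than the one with the $\tfrac12$ in front, not equivalent to it. That said, I am confident the $\tfrac12$ in the printed statement is a typographical error: the unnumbered corollary that immediately follows (for open $J\subset\R$, of which $J=(a,b)$ is a special case) has no $\tfrac12$ and is said to follow ``exactly [by] the same proof as the one of Corollary \ref{open}'' from Theorem \ref{pinterval}; that mirroring only works if Theorem \ref{pinterval} itself has no $\tfrac12$. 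Likewise, Theorem \ref{frasei} specialized to $n=1$, $\Omega=(a,b)$ produces exactly the no-$\tfrac12$ version (since $m_\alpha(x)=\min\{x-a,b-x\}$ there), and $\mathcal D_{n,p,\alpha}$ is normalized as the sharp constant in an inequality carrying no prefactor. So you have proved the correct, intended statement; you should simply note the discrepancy rather than silently drop the factor.

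As for the method: the paper provides no written proof, merely announcing the result as ``an easy consequence.'' Your midpoint splitting---discarding the two off-diagonal rectangles, rescaling each half back to $(0,1)$, and applying the preceding Frank--Seiringer theorem, which requires vanishing at one endpoint only---is a natural and correct realization of that phrase, and the two points you flag (one-sided vanishing, scale invariance of the constant) are exactly the ones that need checking. Note also that applying the preceding theorem directly on $(0,1)$ and its reflection only yields the weight $\max\bigl\{\int |f|^p x^{-\alpha},\ \int|f|^p(1-x)^{-\alpha}\bigr\}$, not $\int|f|^p\min\{x,1-x\}^{-\alpha}$; so the localization to half-intervals is genuinely needed and your argument is the right one.
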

Exactly the same proof as the one of Corollary \ref{open} yields
\begin{corollary}
Let $1 < p< \infty $ and $1 < \alpha < p  $. Let $J\subset\R$ be open and $f$ a function on $J$ with $f \in C^\infty_c(J)$ , then
$$
\int_J \int_J \frac{|f(x)-f(y)|^p}{|x-y|^{1+\alpha}} \,dx\,dy 
\geq \mathcal D_{1,p,\alpha} \int_J \frac{|f(x)|^p}{d_J(x)^{\alpha}}\,dx \,.
$$
\end{corollary}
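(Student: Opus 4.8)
The plan is to run the proof of Corollary~\ref{open} verbatim, with Theorem~\ref{pinterval} replacing Theorem~\ref{interval}. First I would decompose the open set $J\subset\R$ into its connected components, $J=\bigsqcup_k I_k$, a countable disjoint union of open intervals $I_k=(a_k,b_k)$ (some possibly unbounded, in which case Theorem~\ref{pinterval} is applied in its half-line form, i.e.\ reading $1/(x-a_k)$ or $1/(b_k-x)$ as $0$). Since $f\in C^\infty_c(J)$ has compact support, its restriction to each $I_k$ lies in $C^\infty_c(I_k)$. Because the integrand $|f(x)-f(y)|^p/|x-y|^{1+\alpha}$ is nonnegative, discarding all the cross contributions $I_j\times I_k$ with $j\ne k$ only decreases the left-hand side, so
\[
\int_{J\times J}\frac{|f(x)-f(y)|^p}{|x-y|^{1+\alpha}}\,dx\,dy \;\ge\; \sum_k\int_{I_k\times I_k}\frac{|f(x)-f(y)|^p}{|x-y|^{1+\alpha}}\,dx\,dy \;\ge\; 2\,\mathcal D_{1,p,\alpha}\sum_k\int_{I_k}\frac{|f(x)|^p}{\min\{x-a_k,\,b_k-x\}^{\alpha}}\,dx ,
\]
the second inequality being Theorem~\ref{pinterval} applied on each component.

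The next step is to replace the local distance by the global one. For $x\in I_k$ the endpoints $a_k$ and $b_k$ belong to $\partial J$, since $I_k$ is a maximal interval contained in $J$; and any other point of $\R\setminus J$ lies outside the interval $[a_k,b_k]$, hence at distance at least $\min\{x-a_k,\,b_k-x\}$ from $x$. Consequently $d_J(x)=\min\{x-a_k,\,b_k-x\}$ for every $x\in I_k$. Substituting this, and recombining $\sum_k\int_{I_k}$ into $\int_J$, gives
\[
\int_{J\times J}\frac{|f(x)-f(y)|^p}{|x-y|^{1+\alpha}}\,dx\,dy \;\ge\; 2\,\mathcal D_{1,p,\alpha}\int_{J}\frac{|f(x)|^p}{d_J(x)^{\alpha}}\,dx \;\ge\; \mathcal D_{1,p,\alpha}\int_{J}\frac{|f(x)|^p}{d_J(x)^{\alpha}}\,dx ,
\]
the last inequality being trivial since the integral on the right is nonnegative. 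This is the claim.

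There is no real obstacle in this argument; it is genuinely ``exactly the same proof'' as Corollary~\ref{open}. The only two points deserving a word are that passing from $J\times J$ to the diagonal blocks $I_k\times I_k$ costs nothing because the omitted terms are nonnegative, and that $d_J$ coincides with the distance to the boundary of the component $I_k$ on that component. Both are elementary. (If one wanted, the slightly stronger bound involving $\delta_J(x)$ as in Corollary~\ref{open} could in principle be pursued, but the one-dimensional $p$-inequality of Theorem~\ref{pinterval} is stated only with $\min\{x-a,b-x\}$, so the displayed form is the natural output of this method.)
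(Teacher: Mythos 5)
Your proof is correct and is essentially the paper's intended argument (the paper only says ``exactly the same proof as Corollary~\ref{open}''): decompose $J$ into connected components, discard the off-diagonal blocks, apply Theorem~\ref{pinterval} componentwise, and note that $d_J$ restricted to a component $I_k=(a_k,b_k)$ equals $\min\{x-a_k,b_k-x\}$. Your bookkeeping of the factor $1/2$ in Theorem~\ref{pinterval} — yielding the intermediate constant $2\mathcal D_{1,p,\alpha}$ before dropping to $\mathcal D_{1,p,\alpha}$ — and your remarks on unbounded components are both correct.
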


\begin{proof}[Proof of Theorem \ref{frasei}]
The proof is a repetition of the arguments in the proof of Theorem \ref{main}.
\end{proof}

\end{document}